\newtheorem{theorem}{Theorem}[section]
\newtheorem{proposition}{Proposition}[section]
\newtheorem{corollary}{Corollary}[section]
\newtheorem{remark}{Remark}[section]
\newtheorem{definition}{Definition}[section]
\newcommand\myenum[1]
\begin{document}
\title[Some results on the class of unbounded Dunford-Pettis operators]{Some results on the class of unbounded Dunford-Pettis operators}
\author{N. Hafidi}
\author{ J. H'michane}

\address{Noufissa Hafidi, Universit\'{e} Moulay Ismail, Facult\'{e} des Sciences, D\'{e}partement de Math\'{e}matiques, B.P. 11201 Zitoune, Mekn\`{e}s, Morocco.}

\address{Jawad H'michane, Universit\'{e} Moulay Ismail, Facult\'{e} des Sciences, D\'{e}partement de Math\'{e}matiques, B.P. 11201 Zitoune, Mekn\`{e}s, Morocco, And, Gosaef, Faculte des Sciences de Tunis, Manar II-Tunis (Tunisia)}
\email{hm1982jad@gmail.com}

\begin{abstract}
	
We introduce and study the  class of unbounded Dunford-Pettis operators. As consequences, we give basic properties and derive interesting results about the duality, domination problem and  relationship with other known classes of operators.
\color{black}
\end{abstract}
\keywords{$\sigma$-un-Dunford-Pettis operator, unbounded norm convergence, order continuous Banach lattice, atomic Banach lattice, relatively sequentially un-compact set, Schur property.}
\subjclass[2010]{46B42, 47B60, 47B65.}
\maketitle
\section{Introduction}
Along this paper the term operator means a bounded linear mapping.\\
The notion of unbounded order convergence was appeared in \cite{NAK} and was studied in many several papers \cite{GAO1,GAO2, WIC}. Our interest focus on unbounded norm convergence in Banach lattices, which is a recent extension of unbounded order convergence investigated in many works such as \cite{DOT,KD,KADIC}.

 The class of Dunford-Pettis operators is among the most extensively studied  in history of operators acting on Banach lattices literature. In this note we present a generalization of the  Dunford-Pettis operators, by introducing and studying a new classes of operators. Indeed, using the unbounded norm convergence and unbounded norm topology in Banach lattices, we introduce
 the concept of sequentailly unbounded Dunford-Pettis operators (abb, $\sigma$-un-Dunford-Pettis) and we investigate about the following facts:
\begin{itemize}
\item Properties of this class of operators.
\item Domination property.
\item Duality property.
\item Relationships with other known classes of operators.
\end{itemize}

\section{Preliminaries}

Let us recall that a linear mapping $T:X\longrightarrow Y$ between two Banach spaces $X$ and $Y$  is said to be Dunford-Pettis, whenever $x_n{\overset{w}{\longrightarrow}} 0$ in $X$ implies $\|T(x_n)\|\rightarrow0$. Alternatively, $T$ is  Dunford-Pettis if and only if $T$ carries relatively weakly compact sets into compact sets. A linear mapping $T:X\longrightarrow Y$ between two Banach spaces $X$ and $Y$  is said to be weakly compact, whenever $T$ carries the closed unit ball of $X$ to a weakly relatively compact subset of $Y$. An operator $T:E\longrightarrow X$ from a Banach lattice $E$ to a Banach space $X$ is said to be  M-weakly compact if  $\|T(x_n)\|\rightarrow0$ for each norm bounded disjoint sequence $(x_n)$  of $E$. An operator $T:E\longrightarrow X$ from a Banach lattice $E$ to a Banach space $X$ is order weakly compact whenever $T[0,x]$ is a relatively weakly compact subset of $X$ for each $x\in E^{+}$, equivalently,  $\|T(x_n)\|\rightarrow0$ for each order bounded disjoint sequence $(x_n)$  of $E$.

	To state our results, we need to fix some notations and recall some definitions. A Banach lattice is a Banach space $(E,\Vert \cdot \Vert )$ such that $E$ is a vector lattice and its norm satisfies the following property: for each $x,y\in E$ such that $|x|\leq |y|$, we have $\Vert x\Vert \leq \Vert y\Vert $. If $E$ is a Banach lattice, its topological dual $E^{\prime }$, endowed with the dual norm, is also a Banach lattice. A Banach lattice $E$ is order continuous, if for each net $(x_{\alpha })$ such that $x_{\alpha }\downarrow 0$ in $E$, the sequence $(x_{\alpha })$ converges to $0$ for the norm $\Vert \cdot \Vert $, where the notation $x_{\alpha }\downarrow 0$ means that the sequence $(x_{\alpha })$ is decreasing, its infimum exists and $\inf(x_{\alpha })=0$.
	
A vector lattice $E$ is Dedekind $\sigma$-complete if every majorized countable nonempty subset of $E$ has a supremum.

A Banach lattice $E$ is said to be KB-space, if every increasing norm bounded sequence of $E^+$ is norm convergent.

A nonzero element $x$ of a vector lattice $E$ is discrete if the order ideal generated by $x$ equals the vector subspace generated by $x$. The vector lattice $E$ is atomic, if it admits a complete disjoint system of discrete elements.
	
The lattice operations in $E$ are  weakly  sequentially continuous, if the sequence $(|x_{n}|)$  converges to $0$ in the weak topology, whenever the sequence $(x_{n})$  converges weakly  to $0$ in $E$.

 A vector subspace $G$ of a Riesz space $X$ is majorizing $E$, whenever for each $x\in E$ there exists some $y\in G$ with $x\leq y$, equivalently, if for each $x\in E$ there exists some $y\in G$ with $y\leq x$.

 A band $B$ in a Riesz space $X$ that satisfies $X=B\oplus B^d$ is called a projection band, where $B^d$ stands for the disjoint complement of $B$.

 A vector $e>0$ in a Riesz space $X$ is said to be order unit whenever for each $x\in X$ there exists some $\lambda >0$ satisfying $x\leq \lambda e$.

 A positive element $e$ of a normed Riesz space is called quasi-interior  point  if the ideal $E_{u}$ generated by $u$ is norm dense.

 A Banach lattice has the Schur (resp. positive Schur) property whenever  $x_n{\overset{w}{\longrightarrow}} 0$ (resp. $0\leq x_n{\overset{w}{\longrightarrow}} 0$ ) implies  $\|x_n\|\longrightarrow0$.
	
A net $(x_{\alpha})$ in  a Riesz space $X$ is \textbf{unbounded order convergent} (abb. uo-convergent) if, $ |x_{\alpha}-x|\wedge u$ converges  to $0$ in order (abb. $x_{\alpha}{\overset{uo}{\longrightarrow}} 0$), for each $u\in X^+$.
		
A net $(x_{\alpha})$ in  a Banach lattice $E$ is \textbf{unbounded norm convergent} (abb. un-norm convergent) if, $\| |x_{\alpha}-x|\wedge u  \| \longrightarrow 0 $ (abb. $x_{\alpha}{\overset{un}{\longrightarrow}} 0$), for each $u\in E^+$. Note that the norm convergence implies the un-norm convergence. We can easily check, that un-convergence coincides with norm convergence on a Banach lattice with order unit, in particular for order bounded nets.
	
If $T$ is a  linear mapping from a Banach space $X$ into a Banach space $Y$ then, its adjoint operator $T^{\prime}$ is defined from $Y^{\prime}$ into $X^{\prime}$ by $T^{\prime}(f)(x) = f (T (x))$	for each $f\in Y^{\prime}$ and for each $x\in X$. We refer the reader to \cite{AB} for unexplained terminology on Banach lattice theory.

\section{Main results}
We start by the following definitions:

\begin{definition}
	An operator  $T$ from a Banach space $X$ into a Banach lattice $F$  is said to be sequentially unbounded Dunford-Pettis (abb. $\sigma$-un-Dunford-Pettis) whenever   $(T(x_{n}))$ is un-norm null, for each weak-null sequence $(x_{n})$ in $E$.
\end{definition}

\begin{definition}
	An operator  $T$ from  a Banach space $X$ into a Banach lattice $F$  is said to be unbounded Dunford-Pettis (abb. un-Dunford-Pettis) whenever  $(T(x_{\alpha}))$ is un-norm null for every weak-null net $(x_{\alpha})$ in $X$.
\end{definition}
Our research interests in this paper focus on the $\sigma$-un-Dunford-Pettis operators. The class of  $\sigma$-un-Dunford-Pettis operators will be denoted by $DP_{\sigma-un}(X,F)$.

As an immediate consequence of  Proposition 6.2 \cite{DOT}, we have the following result:

\begin{proposition} \label{3.1}
	Let $E$ be a Banach lattice. Then the identity operator $Id_{E}:E\longrightarrow E$ is $\sigma$-un-Dunford-Pettis if and only if  $E$ is order continuous and atomic.
\end{proposition}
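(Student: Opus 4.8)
The plan is to unwind the definition for the identity operator and then reduce everything to the characterization packaged in Proposition 6.2 of \cite{DOT}. By definition, $Id_{E}$ is $\sigma$-un-Dunford-Pettis precisely when every weak-null sequence $(x_{n})$ in $E$ is un-norm null, so the whole statement amounts to proving that ``weak sequential convergence implies un-norm convergence'' is equivalent to $E$ being order continuous and atomic. I will sketch each implication; the substantive one is exactly what I would borrow from \cite{DOT}.

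For sufficiency (order continuous and atomic $\Rightarrow$ $Id_{E}$ is $\sigma$-un-Dunford-Pettis) I would argue as follows. Let $(x_{n})$ be weak-null; then $(x_{n})$ is norm bounded, and testing against the coordinate functionals attached to the atoms shows that $(x_{n})$ is null at every atom. Fix $u\in E^{+}$ and $\varepsilon>0$. Order continuity guarantees that the expansion of $u$ along the atoms converges in norm, so there is a finite set $J$ of atoms whose complementary tail $u_{J^{c}}$ satisfies $\|u_{J^{c}}\|<\varepsilon$. Writing $|x_{n}|\wedge u\leq |x_{n}|\wedge u_{J}+u_{J^{c}}$ and observing that $|x_{n}|\wedge u_{J}$ lives in the finite-dimensional band spanned by $J$, where coordinatewise convergence is norm convergence, I obtain $\limsup_{n}\||x_{n}|\wedge u\|\leq\varepsilon$. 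Since $u$ and $\varepsilon$ are arbitrary, $x_{n}\overset{un}{\longrightarrow}0$.

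For necessity I argue by contraposition, manufacturing a weak-null sequence that is not un-norm null whenever $E$ fails either property. If $E$ is not order continuous, there is an order bounded disjoint sequence with $0\leq x_{n}\leq u$ and $\|x_{n}\|\geq\varepsilon>0$. Because the partial suprema satisfy $x_{1}\vee\cdots\vee x_{N}\leq u$, every positive functional is summable along $(x_{n})$, so $(x_{n})$ is weak-null; yet $|x_{n}|\wedge u=x_{n}$ forces $\||x_{n}|\wedge u\|\geq\varepsilon$, so $(x_{n})$ is not un-norm null. If $E$ is order continuous but not atomic, I would localize to a non-atomic band (modelled on $L_{1}$) and build a normalized, disjointly supported, Rademacher-type sequence that is weak-null but remains bounded away from $0$ after truncation by a fixed $u$; this is the delicate case, and it is precisely what Proposition 6.2 of \cite{DOT} settles.

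The main obstacle is this last, non-atomic order continuous, case. Here order continuity itself forces order bounded disjoint sequences to be norm null, so the quick construction used in the non-order-continuous case is unavailable, and one must instead produce a \emph{non}-order-bounded weak-null sequence that survives the meet with $u$. This is exactly the technical content I expect to quote from \cite{DOT} rather than reprove, which is why the proposition is stated as an immediate consequence of that result.
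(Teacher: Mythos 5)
Your reduction is exactly the paper's: Proposition \ref{3.1} is stated there as an immediate consequence of Proposition 6.2 of \cite{DOT}, which, after unwinding the definition for $Id_{E}$, is precisely the equivalence ``every weak-null sequence is un-null if and only if $E$ is order continuous and atomic'' that you identify. Your sketches of the two elementary directions (norm truncation along a finite set of atoms, and the order bounded disjoint normalized sequence when order continuity fails) are correct and simply reprove part of the cited result, so nothing is missing.
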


We recall from \cite{KADIC} that a subset $A$ of a Banach lattice $E$ is said to be \textbf{ un-compact} (respectively, \textbf{sequentially un-compact}), if every net $(x_\alpha)$ (respectively, every  sequence $(x_n)$) in $A$ has  a subnet (respectively, subsequence) which is un-norm convergent.

In the following result, we give an important characterization of $\sigma$-un-Dunford-Pettis operators.

\begin{proposition} \label{DEF}
	Let $X$ be a Banach space, $F$ be a Banach lattice and $T$ an operator from $X$ into $F$. Then the following statements are equivalent:
	\begin{enumerate}
		\item $T$ is $\sigma$-un-Dunford-Pettis.
		\item $T(A)$ is relatively  sequentially un-compact, for each  weakly compact set $A$ of $X$.
	\end{enumerate}
	
\end{proposition}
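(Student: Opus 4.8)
The plan is to prove the two implications separately, moving between weak compactness and weakly convergent sequences through the Eberlein--\v{S}mulian theorem and using the fact recalled in the preliminaries that un-convergence coincides with norm convergence on order bounded nets. For $(1)\Rightarrow(2)$, let $A\subseteq X$ be weakly compact and let $(y_n)$ be a sequence in $T(A)$, say $y_n=T(a_n)$ with $a_n\in A$. By Eberlein--\v{S}mulian there is a subsequence with $a_{n_k}\overset{w}{\longrightarrow}a\in A$, so $a_{n_k}-a\overset{w}{\longrightarrow}0$, and since $T$ is $\sigma$-un-Dunford-Pettis we get $T(a_{n_k}-a)\overset{un}{\longrightarrow}0$, that is $y_{n_k}\overset{un}{\longrightarrow}T(a)$. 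Hence every sequence in $T(A)$ has an un-convergent subsequence, which is $(2)$.

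For $(2)\Rightarrow(1)$ I would take a weak-null sequence $(x_n)$ in $X$ and argue by contradiction. The set $A=\{x_n:n\in\mathbb{N}\}\cup\{0\}$ is weakly compact, so by $(2)$ the set $T(A)$ is relatively sequentially un-compact. If $(T(x_n))$ were not un-null, there would exist $u\in F^{+}$, $\varepsilon>0$ and a subsequence with $\big\||T(x_{n_k})|\wedge u\big\|\ge\varepsilon$ for all $k$; relative sequential un-compactness then provides a further subsequence $T(x_{n_{k_j}})\overset{un}{\longrightarrow}y$. Because $|T(x_{n_{k_j}})|\wedge u$ lies in the order interval $[0,u]$ and un-converges to $|y|\wedge u$, it converges to $|y|\wedge u$ in norm, so $\big\||y|\wedge u\big\|\ge\varepsilon$ and in particular $y\neq 0$. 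On the other hand $T$ is weak-to-weak continuous, whence $T(x_{n_{k_j}})\overset{w}{\longrightarrow}0$; provided the un-limit and the weak-limit of a sequence coincide whenever both exist, this forces $y=0$, the required contradiction, and $(1)$ follows.

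The hard part will be exactly this last ingredient: that a sequence which is both un-convergent and weakly convergent has equal un- and weak-limits. Uniqueness of un-limits (test $\big\||y-w|\wedge u\big\|$ with $u=|y-w|$) reduces the claim to showing that a weak-null, un-convergent sequence $(g_j)$ has un-limit $0$. The order bounded localization controls only the truncations $|g_j|\wedge u$, which converge in norm, and says nothing about the overflow $(|g_j|-u)^{+}$. To handle it I would use that $(g_j)$, being weakly convergent, is relatively weakly compact, and bring in the Banach-lattice theory of such sets (disjointness and uniform-integrability type estimates bounding $\sup_j\big\|(|g_j|-u)^{+}\big\|$) to show that every positive functional annihilates the weak limit, so that it must equal the un-limit $0$. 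Reconciling these two a priori unrelated modes of convergence, without assuming $F$ order continuous, is the delicate point of the proof.
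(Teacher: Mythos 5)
Your two implications follow the same architecture as the paper's proof: $(1)\Rightarrow(2)$ via Eberlein--\v{S}mulian (your version, which un-converges $y_{n_k}$ to $T(a)$ instead of normalizing the weak limit to $0$, is if anything cleaner), and $(2)\Rightarrow(1)$ by testing hypothesis (2) on the weakly compact set $\{x_n\}\cup\{0\}$. The paper runs the second implication through the subsequence-of-a-subsequence principle rather than by contradiction, but both versions reduce to the same single fact, which the paper dispatches with the words ``uniqueness of limit implies $y=0$'': \emph{a sequence that is both un-convergent and weakly convergent has the same limit in both senses}. You correctly isolate this as the only nontrivial ingredient, but you do not prove it, and the strategy you sketch is not the right lever: relative weak compactness of $\{g_j\}$ gives no control of the overflow $(|g_j|-u)^{+}$ in a general Banach lattice, and ``every positive functional annihilates the limit'' is precisely the conclusion you need, not a tool for reaching it.

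The missing lemma does hold, and the standard route is the one the paper itself uses in the proof of Proposition \ref{MW}. After translating by the un-limit, your reduction becomes: if $(g_j)$ is un-null and $g_j\overset{w}{\longrightarrow}w$, then $w=0$. A weakly convergent sequence is norm bounded, so Theorem 3.2 of \cite{DOT} yields a subsequence $(g_{j_i})$ and a disjoint sequence $(d_i)$ with $\|g_{j_i}-d_i\|\to 0$; hence $d_i\overset{w}{\longrightarrow}w$ as well. It remains to see that a disjoint sequence can only converge weakly to $0$: for each fixed $m$, Mazur's theorem places $w$ in the norm closure of $\mathrm{conv}\{d_i:i>m\}$, which is contained in the band $\{d_m\}^{d}$, so $w\perp d_m$ for every $m$; on the other hand $w$ lies in the norm closed band generated by all the $d_i$, whence $w\in\{d_i\}^{d}\cap\{d_i\}^{dd}=\{0\}$. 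With this lemma supplied, your contradiction argument, and the rest of your write-up, is correct.
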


\begin{proof}
$(1)\Longrightarrow	(2)$ Let $A\subset X$ be a  weakly compact set, we will show that $T(A)$ is relatively sequentially un-compact. To this end, let $y_n \in T(A)$, so that $y_n=T(x_n)$ for some  $(x_n)\in A$. Now, as $A$ is  weakly compact, by the Eberlian-Smulian Theorem there exists $(x_{n_{k}})$ a subsequence of $(x_n)$ such that $(x_{n_{k}}){\overset{w}{\longrightarrow}} y$ for some $y\in X$, we may assume that $y=0$. As $T$ is  $\sigma$-un-Dunford-Pettis, then $T(x_{n_{k}}){\overset{un}{\longrightarrow}} 0$, this shows that $T(A)$ is a relatively sequentially un-compact set of $F$.

$(2)\Longrightarrow	(1)$ Let $(x_n)$ be a weak-null sequence in $X$ and $(n_k)_k$ is any increasing sequence of positive integers. Then notice that  $(x_{n_k})_k$ is still weak-null sequence, hence if  we set $A=\{ x_{n_k},  k\in\mathbb{N} \}\cup \{0\} $ clearly  $A$ is a weakly compact set of $X$ and by assumption  $T(A)$ is a relatively sequentially un-compact set in $F$. Hence,  there exists $(T(x_{n_{k'}}))$ a subsequence of $(T(x_{n_k}))$ such that $T(x_{n_{k'}}){\overset{un}{\longrightarrow}} y$ for some $y\in X$, uniqueness of limit implies that $y=0$, so that  $T(x_{n_{k'}}){\overset{un}{\longrightarrow}} 0$. Thus,  given any subsequence $(T(x_{n_k}))$ of  $ T(x_{n})$, there exists a subsequence of this  subsequence  which is un-norm null. This implies that $T(x_{n}){\overset{un}{\longrightarrow}} 0$ and shows that $T$ is  $\sigma$-un-Dunford-Pettis.
	\end{proof}

\begin{remark}	
	Each Dunford-Pettis  operator is $\sigma$-un-Dunford-Pettis, but the converse does not hold in general. Indeed, the identity operator $Id_{c_0}$ of the Banach lattice $c_0$  is $\sigma$-un-Dunford-Pettis ( since  $c_0$ is order continuous and atomic) but fails to be Dunford-Pettis.
\end{remark}

\begin{remark}	
	We can check easily that the set of all $\sigma$-un-Dunford-Pettis operators between a Banach space $X$ and a Banach lattice $F$  is a closed linear subspace of $L(X,F)$.
\end{remark}

For a further results, we need to focus on other concepts around unbounded norm convergence and unbounded norm topology. Namely, it seems reasonable to introduce the notion of continuity related to these concepts and which will be used in the next results.
\begin{definition}\cite{CHEN}
	Let $E$ and $F$ be two Banach lattices.
	\begin{enumerate}
		\item An operator  $T:E\longrightarrow F$ is called unbounded norm continuous (abb. un-continuous), if $x_\alpha{\overset{un}{\longrightarrow}} 0$ in $E$ implies  $T(x_\alpha){\overset{un}{\longrightarrow}} 0$ in $F$.
		\item  An operator  $T:E\longrightarrow F$ is called  $\sigma$-unbounded norm continuous (abb. $\sigma$-un-continuous), if $x_n{\overset{un}{\longrightarrow}} 0$ in $E$ implies  $T(x_n){\overset{un}{\longrightarrow}} 0$ in $F$.
	\end{enumerate}
\end{definition}

\begin{remark}
	We should mention that unbounded continuity is different from norm continuity. Indeed, we consider the canonical injection $i:c_0\longrightarrow \ell^{\infty}$ which is a norm continuous operator, but by considering the Example 2.6 \cite{DOT} we can check that the canonical injection $i:c_0\longrightarrow \ell^{\infty}$ fails to be un-continuous.	
\end{remark}

\begin{proposition}
	Let $E$ and $F$ be two Banach lattices.
	If $T:E\longrightarrow F$ is a lattice  homomorphism such that its range is norm-dense or majorizing or projection band, then $T$ is an un-continuous operator. In particular, if $T:E\longrightarrow F$ is an onto lattice homomorphism, then $T$ is an un-continuous operator.
	
\end{proposition}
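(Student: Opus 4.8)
The plan is to reduce everything to the single fact that a lattice homomorphism commutes with the operations $|\cdot|$ and $\wedge$. Let $(x_\alpha)$ be a net in $E$ with $x_\alpha\overset{un}{\longrightarrow}0$, that is, $\||x_\alpha|\wedge u\|\to 0$ for every $u\in E^+$. Since $T$ is a lattice homomorphism we have $|T(x_\alpha)|=T(|x_\alpha|)$, so proving $T(x_\alpha)\overset{un}{\longrightarrow}0$ amounts to showing that $\|T(|x_\alpha|)\wedge v\|\to 0$ for each $v\in F^+$. The basic estimate is that, for any $u\in E^+$,
\[
T(|x_\alpha|)\wedge T(u)=T(|x_\alpha|\wedge u),
\]
whence $\|T(|x_\alpha|)\wedge T(u)\|\le \|T\|\,\||x_\alpha|\wedge u\|\to 0$. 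Thus the whole difficulty is to compare a fixed $v\in F^+$ with an element $T(u)$ of the range $G:=T(E)$, and each of the three hypotheses on $G$ furnishes such a comparison.

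First I would treat the majorizing case. Given $v\in F^+$, majorization yields $u_0\in E$ with $v\le T(u_0)$; replacing $u_0$ by $u_0^+$ (using $T(u_0)\le T(u_0)^+=T(u_0^+)$) I may take $u:=u_0^+\ge 0$, so that $0\le T(|x_\alpha|)\wedge v\le T(|x_\alpha|)\wedge T(u)$. Monotonicity of the lattice norm together with the basic estimate then forces $\|T(|x_\alpha|)\wedge v\|\to 0$. Next, for the projection band case, write $F=B\oplus B^d$ with $B=G$ and decompose $v=v_1+v_2$, where $v_1=P_Bv\in B^+$ and $v_2\in(B^d)^+$. Since $T(|x_\alpha|)\in B$ and $B$ is an ideal, $T(|x_\alpha|)\wedge v$ lies in $B$ and is $\le v$; applying the positive band projection $P_B$ gives $T(|x_\alpha|)\wedge v\le P_Bv=v_1$, hence $T(|x_\alpha|)\wedge v=T(|x_\alpha|)\wedge v_1$. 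As $v_1\in B=T(E)$ I may write $v_1=T(u)$ with $u\ge 0$ (again passing to $u^+$), and the basic estimate finishes this case exactly as before.

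The norm-dense case is the one I expect to be the main obstacle, because density yields only approximation, not domination, so the clean inequality $v\le T(u)$ is unavailable. Here I would fix $\varepsilon>0$, choose $w\in G$ with $\|v-w\|<\varepsilon$, and again reduce to $w=T(u)$, $u\ge 0$ (replacing $w$ by $w^+$, which only decreases $\|v-w\|$ since $v\ge 0$). The key tool is Birkhoff's contraction inequality $|a\wedge v-a\wedge w|\le|v-w|$, applied with $a=T(|x_\alpha|)$; combined with norm monotonicity it gives
\[
\|T(|x_\alpha|)\wedge v\|\le \|T(|x_\alpha|)\wedge v-T(|x_\alpha|)\wedge w\|+\|T(|x_\alpha|)\wedge T(u)\|\le \varepsilon+\|T\|\,\||x_\alpha|\wedge u\|.
\]
Letting $\alpha$ run makes the last term vanish, so $\limsup_\alpha\|T(|x_\alpha|)\wedge v\|\le\varepsilon$, and since $\varepsilon$ is arbitrary we conclude $T(x_\alpha)\overset{un}{\longrightarrow}0$. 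Finally, the ``in particular'' statement is immediate: if $T$ is onto then $G=F$ is simultaneously norm-dense, majorizing and a projection band, so any one of the three arguments applies.
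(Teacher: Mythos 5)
Your proof is correct, and it is worth comparing with the paper's. Both arguments pivot on the same identity $|T(x_\alpha)|\wedge T(u)=T(|x_\alpha|)\wedge T(u)=T(|x_\alpha|\wedge u)$, which shows that $\|\,|T(x_\alpha)|\wedge v\,\|\to 0$ for every $v$ in the positive part of the sublattice $G=T(E)$, i.e.\ that $T(x_\alpha)$ is un-null \emph{relative to} $G$. At that point the paper stops and invokes Theorem~4.3 of the Kandi\'c--Marabeh--Troitsky paper, which says precisely that for a sublattice that is norm-dense, majorizing, or a projection band, un-convergence in the sublattice implies un-convergence in the ambient lattice. You instead prove that transfer by hand in each of the three cases: domination $v\le T(u_0^+)$ plus norm monotonicity in the majorizing case; the band projection argument showing $T(|x_\alpha|)\wedge v=T(|x_\alpha|)\wedge P_Bv$ in the projection band case; and Birkhoff's inequality $|a\wedge v-a\wedge w|\le|v-w|$ plus an $\varepsilon$-approximation in the dense case. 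All three case arguments are sound (including the small points that $w^+\in G$ because $G$ is a sublattice, and that every positive element of $G$ has a positive preimage since $T(x)^+=T(x^+)$). What your route buys is a self-contained proof that does not lean on the external theorem; what the paper's route buys is brevity and the observation that the three hypotheses are exactly the ones under which the cited result applies, so the proposition is really a corollary of it.
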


\begin{proof}
	Let $(x_{\alpha})$ be a net in $E$ such that $x_{\alpha}{\overset{un}{\longrightarrow}} 0$ and $v\in (T(E))^{+}=T(E^{+})$ (since $T$ is lattice homomorphism), then there exists $u\in E^+$ such that $T(u)=v$, in particular we have $|x_{\alpha}|\wedge u{\overset{\|.\|}{\longrightarrow}} 0$. On the other hand, we have the equality  $$|T(x_{\alpha})|\wedge v=T(|x_{\alpha}|)\wedge T(u)=T(|x_{\alpha}|\wedge u){\overset{\|.\|}{\longrightarrow}} 0 \hspace{0.3cm} \text{for all}\hspace{0.3cm}  u\in E^+  .$$
	Thus, $T(x_{\alpha}){\overset{un}{\longrightarrow}} 0$ in $T(E)$. As $T$ is a lattice homomorphism, we have that $T(E)$ is a sublattice of $F$ and hence the result follows from  Theorem 4.3 \cite{KD}.
\end{proof}

Our next interest is to determine whether the set of $\sigma$-un-Dunford-Pettis operators forms a two-sided ideal in the class of all operators. Note that the class of $DP_{\sigma-un}(X,F)$ is a right ideal, but  unfortunately it is not a left ideal. Indeed, if we consider the composed operator; $$c_0\overset{Id_{c_0}}{\longrightarrow} c_0\overset{i}{\longrightarrow} \ell^{\infty}$$  Where $i:c_0\longrightarrow \ell^{\infty}$ is the canonical injection, we check easily  that  $i\circ Id_{c_0}=i$ is not $\sigma$-un-Dunford-Pettis. However,  $Id_{c_0}$ the identity operator of $c_0$ is $\sigma$-un-Dunford-Pettis.\\
Nevertheless, we have the following result;

\begin{proposition}\label{4.2}
	Let $X$ be a Banach space, and $G$, $F$ be  Banach lattices.
 If $T:G\longrightarrow F$  is a $\sigma$-un-continuous operator, then for each $\sigma$-un-Dunford-Pettis operator $S:X\longrightarrow G$ the composed operator $T\circ S$ is $\sigma$-un-Dunford-Pettis.
\end{proposition}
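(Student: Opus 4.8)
The plan is to unwind the definitions and chain them together directly. We want to show that $T\circ S$ is $\sigma$-un-Dunford-Pettis, which by Definition means: for every weak-null sequence $(x_n)$ in $X$, the sequence $((T\circ S)(x_n))$ is un-norm null in $F$. So first I would fix an arbitrary weak-null sequence $(x_n)$ in $X$, i.e. $x_n \overset{w}{\longrightarrow} 0$.

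The key steps are then immediate applications of the two hypotheses in sequence. Since $S:X\longrightarrow G$ is $\sigma$-un-Dunford-Pettis, applying its defining property to the weak-null sequence $(x_n)$ yields that $(S(x_n))$ is un-norm null in $G$, that is $S(x_n)\overset{un}{\longrightarrow} 0$. Now $(S(x_n))$ is a sequence in $G$ that is un-norm null, so it is exactly the kind of input to which the $\sigma$-un-continuity of $T$ applies. Invoking the $\sigma$-un-continuity of $T:G\longrightarrow F$ — which says precisely that $y_n\overset{un}{\longrightarrow} 0$ in $G$ implies $T(y_n)\overset{un}{\longrightarrow} 0$ in $F$ — with $y_n = S(x_n)$, I conclude that $T(S(x_n))\overset{un}{\longrightarrow} 0$ in $F$.

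Putting these together, for the arbitrary weak-null sequence $(x_n)$ we have shown $(T\circ S)(x_n)\overset{un}{\longrightarrow} 0$, which is exactly the statement that $T\circ S$ is $\sigma$-un-Dunford-Pettis. The proof is a two-link chain: weak-null $\xrightarrow{S\text{ is }\sigma\text{-un-DP}}$ un-null $\xrightarrow{T\text{ is }\sigma\text{-un-cts}}$ un-null.

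I do not expect any genuine obstacle here, since the statement is essentially a compatibility observation: the \emph{output} type of a $\sigma$-un-Dunford-Pettis operator (un-norm null sequences) matches exactly the \emph{input} type that $\sigma$-un-continuity is designed to preserve. The one point worth noting — and the reason the proposition is phrased with $\sigma$-un-continuity rather than full un-continuity — is that we are only ever given \emph{sequences} (coming from the sequential Dunford-Pettis hypothesis), so the sequential form of un-continuity is precisely what is needed; full un-continuity would be more than required. This also explains the counterexample preceding the proposition: $i:c_0\longrightarrow\ell^\infty$ fails to be un-continuous, so composing with it on the left can destroy the $\sigma$-un-Dunford-Pettis property, whereas a $\sigma$-un-continuous left factor cannot.
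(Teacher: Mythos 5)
Your proof is correct and follows exactly the same two-step chain as the paper's own argument: apply the $\sigma$-un-Dunford-Pettis property of $S$ to a weak-null sequence to get an un-null sequence, then apply the $\sigma$-un-continuity of $T$. Nothing is missing, and your remark on why sequential un-continuity suffices is a reasonable addition.
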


\begin{proof}	
Let $(x_n)$ be weak-null sequence in $E$, since $S$ is a $\sigma$-un-Dunford-Pettis operator, we have that $S(x_n){\overset{un}{\longrightarrow}} 0 $, using the fact that $T$ is $\sigma$-un continuous, it follows that $T\circ S(x_n){\overset{un}{\longrightarrow}} 0 $, and therefore $T\circ S$ is $\sigma$-un-Dunford-Pettis.
\end{proof}		

Note that an order weakly compact operator is not in general $\sigma$-un-Dunford-Pettis. Indeed, the canonical injection  $ i:c_0\longrightarrow \ell^\infty$ is order weakly compact but not $\sigma$-un-Dunford-Pettis, (by Example 2.6 \cite{DOT} $i(e_n)=e_n {\overset{un}{\nrightarrow}} 0$ in $\ell^\infty$).

\begin{proposition}\label{3.3.1}
	Let $E$ and $F$ be two Banach lattices and $T:E\longrightarrow F$ an order bounded operator. If $T$ is $\sigma$-un-Dunford-Pettis, then $T$ is order weakly compact.
\end{proposition}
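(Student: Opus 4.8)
The plan is to verify the disjoint-sequence characterization of order weak compactness recalled in the Preliminaries: it suffices to show that $\|T(x_n)\|\to 0$ for every order bounded disjoint sequence $(x_n)$ in $E$. So I would fix such a sequence, say with $|x_n|\le u$ for some $u\in E^+$, and aim to produce norm convergence of $(T(x_n))$ to $0$.

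My first step would be to observe that any order bounded disjoint sequence is weakly null. Indeed, $(|x_n|)$ is a positive disjoint sequence lying in $[0,u]$, so for each $N$ disjointness gives $\sum_{k=1}^{N}|x_k|=\bigvee_{k=1}^{N}|x_k|\le u$. Applying any $f\in (E')^{+}$ yields $\sum_{k=1}^{N} f(|x_k|)=f\big(\sum_{k=1}^{N}|x_k|\big)\le f(u)$, so the series $\sum_{k} f(|x_k|)$ converges and hence $f(|x_k|)\to 0$. Writing an arbitrary $g\in E'$ as a difference of positive functionals and using $|g(x_k)|\le |g|(|x_k|)$, I conclude $g(x_k)\to 0$, that is $x_n\overset{w}{\longrightarrow}0$.

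Next, since $T$ is $\sigma$-un-Dunford-Pettis, the weak-nullity of $(x_n)$ gives $T(x_n)\overset{un}{\longrightarrow}0$ in $F$. It then remains to upgrade this un-null convergence to norm convergence, and this is exactly where the order boundedness of $T$ enters. Because $T$ is order bounded, it maps the order interval $[-u,u]$ into some order interval $[-w,w]$ of $F$; in particular $|T(x_n)|\le w$ for all $n$, so $(T(x_n))$ is an order bounded sequence of $F$. As recalled in the Preliminaries, un-convergence and norm convergence coincide on order bounded sequences (taking the majorant $w$ itself as test vector, $|T(x_n)|\wedge w=|T(x_n)|$), whence $\|T(x_n)\|\to 0$. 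By the characterization above, $T$ is order weakly compact.

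The only genuinely delicate point is this last implication: un-null convergence does not imply norm-null convergence in general (this is precisely what fails for the canonical injection $c_0\hookrightarrow\ell^{\infty}$), and the hypothesis that $T$ be order bounded is what licenses the passage back to the norm by confining the images to a fixed order interval. The weak-nullity of order bounded disjoint sequences is classical and should cause no trouble.
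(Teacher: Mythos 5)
Your proof is correct and follows essentially the same route as the paper: apply the $\sigma$-un-Dunford-Pettis hypothesis to an order bounded weak-null test sequence, use the order boundedness of $T$ to confine the images to an order interval so that un-convergence upgrades to norm convergence, and conclude via a disjoint/weak-null sequence characterization of order weak compactness. The only difference is cosmetic: the paper invokes Corollary 3.4.9 of Meyer-Nieberg directly on order bounded weak-null sequences in $E^+$, whereas you work with order bounded disjoint sequences and supply the (correct) classical argument that such sequences are weakly null.
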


\begin{proof}
 Let $(x_n)_n$ be an order bounded weak-null sequence in $E^{+}$, since $T$ is  $\sigma$-un-Dunford-Pettis it follows that  $T(x_n){\overset{un}{\longrightarrow}} 0$, by order boundedness of $T$ we have $(T(x_n))_n$ is an order bounded sequence and then $T(x_n){\overset{\|.\|}{\longrightarrow}} 0$. Hence, it follows from Corollary 3.4.9\cite{Mey} that $T$ is an order weakly compact operator.
\end{proof}

\begin{remark}	
The condition `` $T:E\longrightarrow F$ an order bounded operator" is essential in Proposition \ref{3.3.1}. Indeed, we choose a non-weakly compact operator $T:E\longrightarrow F$ such that $F$ is a non-reflexive  atomic order continuous Banach lattice (like $c_0$) and $E$ is a Banach lattice with order unit. Since $F$ is an atomic order continuous Banach lattice, then the operator $T$ is $\sigma$-un-Dunford-Pettis. On the other hand, $E$ has order unit and $T$ is a non-weakly compact operator implies that $T$ is necessary not order weakly compact and it is also not order bounded operator. Otherwise, since $B_E$ is order bounded, where $B_E$ is the closed unit ball of $E$, then $T(B_E)$ is order bounded in $F$ which is order continuous, it follows from Theorem 4.9 \cite{AB} that $T(B_E)$ is weakly compact, that is $T$ is a weakly compact operator and this is a contradiction.
\end{remark}

\begin{proposition} \label{3.3.}
	Let $E$ and $F$ be Banach lattices such that $E$ has quasi-interior point and the lattice operations of $E$ are weakly sequentially continuous. If $T:E\longrightarrow F$ is a lattice homomorphism with norm-dense range, then the following statements are equivalent:
	\begin{enumerate}
		\item $T$ is order weakly compact.
		
		\item $T$ is $\sigma$-un-Dunford-Pettis.
	\end{enumerate}
\end{proposition}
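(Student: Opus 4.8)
The plan is to prove the two implications separately, treating $(2)\Rightarrow(1)$ as essentially immediate and concentrating the work on $(1)\Rightarrow(2)$. For $(2)\Rightarrow(1)$: a lattice homomorphism is positive, and a positive operator is order bounded (it maps $[0,x]$ into $[0,T(x)]$). Hence $T$ is an \emph{order bounded} $\sigma$-un-Dunford-Pettis operator, and Proposition \ref{3.3.1} applies verbatim to yield that $T$ is order weakly compact.

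For $(1)\Rightarrow(2)$, fix a quasi-interior point $u$ of $E$. My first step is to show that $w:=T(u)$ is a quasi-interior point of $F$. Since $T$ is a lattice homomorphism it maps the ideal $E_u$ generated by $u$ into the ideal $F_w$ generated by $w$ (if $|x|\le\lambda u$ then $|T(x)|=T(|x|)\le\lambda w$); as $E_u$ is norm dense in $E$ and $T$ has dense range, $\overline{T(E_u)}\supseteq\overline{T(E)}=F$, so $F_w$ is dense and $w$ is quasi-interior. The purpose of manufacturing a quasi-interior point in $F$ is that un-null convergence may be tested against a single quasi-interior point: a sequence $(z_n)$ in $F$ satisfies $z_n\overset{un}{\longrightarrow}0$ if and only if $\big\||z_n|\wedge w\big\|\to 0$ (the nontrivial direction follows from the estimate $|z_n|\wedge(nw)\le n(|z_n|\wedge w)$ together with density of $F_w$). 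Thus it suffices to prove $\big\||T(x_n)|\wedge w\big\|\to 0$ for every weak-null sequence $(x_n)$ in $E$.

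Next I would reduce to a positive, order bounded sequence. Since the lattice operations of $E$ are weakly sequentially continuous, $x_n\overset{w}{\longrightarrow}0$ gives $|x_n|\overset{w}{\longrightarrow}0$, and then the identity $a\wedge u=\frac{1}{2}\big(a+u-|a-u|\big)$ yields $y_n:=|x_n|\wedge u\overset{w}{\longrightarrow}0$ with $0\le y_n\le u$. Because $T$ is a lattice homomorphism, $|T(x_n)|\wedge w=T(|x_n|)\wedge T(u)=T(|x_n|\wedge u)=T(y_n)$. Hence the whole problem collapses to showing $\|T(y_n)\|\to 0$ for the positive, order bounded, weak-null sequence $(y_n)$.

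The main obstacle is precisely this last step, and it is where order weak compactness is used. An order weakly compact operator carries positive order bounded weak-null sequences to norm-null sequences; this is the characterization already invoked in the proof of Proposition \ref{3.3.1} (Corollary 3.4.9 of \cite{Mey}). Alternatively, one factors the order weakly compact $T$ as $T=R\circ S$ through an order continuous Banach lattice $G$ with $S$ positive: then $S(y_n)$ is positive, order bounded and weak-null in $G$, hence norm-null (in an order continuous Banach lattice a positive order bounded weak-null sequence is norm-null, as one checks via the $L$-weak compactness of order intervals), so $\|T(y_n)\|\le\|R\|\,\|S(y_n)\|\to0$. In either case $\big\||T(x_n)|\wedge w\big\|=\|T(y_n)\|\to0$, so $T(x_n)\overset{un}{\longrightarrow}0$ and $T$ is $\sigma$-un-Dunford-Pettis. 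I would close by noting that the dense-range hypothesis is indispensable exactly at the reduction step, since it is what forces $T(u)$ to be quasi-interior in $F$; dropping it, the canonical injection $c_0\hookrightarrow\ell^\infty$ (a lattice homomorphism that is order weakly compact but not $\sigma$-un-Dunford-Pettis) would violate the equivalence.
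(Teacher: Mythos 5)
Your proposal is correct and follows essentially the same route as the paper: $(2)\Rightarrow(1)$ via order boundedness of lattice homomorphisms and Proposition \ref{3.3.1}, and $(1)\Rightarrow(2)$ by passing to $|x_n|\wedge u$ using weak sequential continuity, applying order weak compactness (Corollary 3.4.9 of \cite{Mey}), using the lattice homomorphism identity $|T(x_n)|\wedge T(u)=T(|x_n|\wedge u)$, and testing un-convergence against the quasi-interior point $T(u)$. The only difference is that you prove directly (rather than cite) the facts that $T(u)$ is quasi-interior and that un-nullity can be tested against a single quasi-interior point, which the paper takes from \cite{AB} and Lemma 2.11 of \cite{DOT}.
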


\begin{proof}
	$(1)\Rightarrow (2)$ Let $(x_n)$ be a weak-null sequence and $e$ a quasi-interior point of $E$, since the  lattice operations of $E$ are weakly sequentially continuous, we have  $|x_n|\wedge e$ is weak-null and it is a positive order bounded sequence. Now, as $T$ is order weakly compact it follows   from  Corollary 3.4.9 \cite{Mey} that  $T(|x_n|\wedge e){\overset{\|.\|}{\longrightarrow}} 0$, on the other hand, $T$ is a lattice homomorphism, hence we have the following equality  $$|T(x_n)|\wedge T(e)=T(|x_n|)\wedge T(e)=T(|x_n|\wedge e).$$  So, $|T(x_n)|\wedge Te{\overset{\|.\|}{\longrightarrow}} 0$. Now, by [Exercise 11 Sec.15 \cite{AB}] $T(e)$ is a quasi-interior point of $F$ and from Lemma 2.11 \cite{DOT} we infer that $T(x_n)$ is un-null, therefore $T$  is $\sigma$-un-Dunford-Pettis.\\
	$(2)\Rightarrow (1)$ Since each lattice homomorphism $T:E\longrightarrow F$ is order bounded, the implication follows from Proposition \ref{3.3.1}.
\end{proof}

We should mention that the domination problem for the class  of $\sigma$-un-Dunford-Pettis operators  is not verified. Indeed, we consider the example mentioned in \cite{AB}, let $S,T: L_{1}[0,1]\longrightarrow \ell^{\infty}$ be two positive operators defined by

	$$ \begin{array}{lrcl}
S : & L_{1}[0,1] & \longrightarrow &  \ell^{\infty} \\
&f & \longmapsto &  \left( \displaystyle\int_{0}^{1}f(x) r_{1}^{+}(x) dx,\int_{0}^{1}f(x) r_{2}^{+}(x) dx,...\right) \end{array}  $$
and
$$ \begin{array}{lrcl}
T : & L_{1}[0,1] & \longrightarrow &  \ell^{\infty}\\
& f & \longmapsto &  \left( \displaystyle\int_{0}^{1}f(x) dx,\int_{0}^{1}f(x)  dx,...\right)  \end{array}  $$

Where $(r_{n})$ is the sequence of Rademacher functions on $[0,1]$, we easily check that $ 0\leq S\leq T$ and that $T$ is a compact, hence it is $\sigma$-un-Dunford-Pettis operator. On the other hand, we have $r_{n}{\overset{w}{\longrightarrow}} 0$ in $L_{1}[0,1]$. It is observed also that $\|S(r_{n})\|_{\infty}\geq \int_{0}^{1}r_{n}(x) r_{n}^{+}(x) dx =\frac{1}{2}$. Now, we remind that norm convergence and unbouded norm convergence coincide in Banach lattices with order unit. Therefore, $ S(r_{n}) {\overset{un}{\nrightarrow}} 0$ and this shows that $S$ is not $\sigma$-un-Dunford-Pettis operator.

But  under some conditions, we find positive solution as, the next result shows.

\begin{proposition}
	Let $E$ and $F$ be two Banach lattices and $S,T:E\longrightarrow F$ be two positive operators such that $0\leq S\leq T$. The class of positive $\sigma$-un-Dunford-Pettis operators satisfies the domination property if one of the following statements is valid:
\begin{enumerate}
\item $T$ is a lattice homomorphism.
\item The lattice operations of $E$ are weakly sequentially continuous.
\end{enumerate}
\end{proposition}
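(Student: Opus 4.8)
The plan is to prove directly that $S$ maps every weak-null sequence to an un-null sequence, reducing the whole argument to a single chain of lattice inequalities. Fix a weak-null sequence $(x_n)$ in $E$ and an arbitrary $u\in F^+$. Since $S$ is positive we have $|S(x_n)|\leq S(|x_n|)$, and since $0\leq S\leq T$ together with $|x_n|\geq 0$ gives $S(|x_n|)\leq T(|x_n|)$. Taking meets with $u$ and invoking the monotonicity of the lattice norm, I would obtain
\[
\big\||S(x_n)|\wedge u\big\|\leq\big\|S(|x_n|)\wedge u\big\|\leq\big\|T(|x_n|)\wedge u\big\|.
\]
Consequently everything reduces to showing that $T(|x_n|){\overset{un}{\longrightarrow}} 0$: once that is established, the left-hand side tends to $0$ for every $u\in F^+$, which is exactly $S(x_n){\overset{un}{\longrightarrow}} 0$, and hence $S$ is $\sigma$-un-Dunford-Pettis.

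It then remains to verify $T(|x_n|){\overset{un}{\longrightarrow}} 0$ under each hypothesis. If $(1)$ holds, then $T$ is a lattice homomorphism, so $T(|x_n|)=|T(x_n)|$. Because $(x_n)$ is weak-null and $T$ is $\sigma$-un-Dunford-Pettis, we have $T(x_n){\overset{un}{\longrightarrow}} 0$; since the quantities $\big\||y_n|\wedge u\big\|$ defining un-nullity are unchanged when $y_n$ is replaced by $|y_n|$, this is the same as $|T(x_n)|{\overset{un}{\longrightarrow}} 0$, i.e. $T(|x_n|){\overset{un}{\longrightarrow}} 0$. If instead $(2)$ holds, then the lattice operations of $E$ being weakly sequentially continuous force $|x_n|{\overset{w}{\longrightarrow}} 0$, so $(|x_n|)$ is itself a weak-null sequence; applying the $\sigma$-un-Dunford-Pettis property of $T$ directly to $(|x_n|)$ yields $T(|x_n|){\overset{un}{\longrightarrow}} 0$.

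The only genuine subtlety, and the reason a hypothesis is needed at all, is that for an arbitrary weak-null $(x_n)$ the sequence of moduli $(|x_n|)$ need not be weak-null, so one cannot feed $(|x_n|)$ into the $\sigma$-un-Dunford-Pettis property of $T$ without extra assumptions. Each condition removes this obstacle in its own way: $(1)$ pushes the modulus past $T$, so the hypothesis is applied to the genuinely weak-null $(x_n)$, whereas $(2)$ guarantees that $(|x_n|)$ is weak-null from the outset. Apart from this point, the proof uses only the standard inequality $|Sx|\leq S|x|$ for positive operators and the monotonicity of the lattice norm, so I expect no further difficulty.
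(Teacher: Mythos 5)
Your proof is correct and follows essentially the same route as the paper: the common inequality $|S(x_n)|\wedge u\leq S(|x_n|)\wedge u\leq T(|x_n|)\wedge u$ followed by establishing $T(|x_n|){\overset{un}{\longrightarrow}}0$, via $T(|x_n|)=|T(x_n)|$ in the lattice-homomorphism case and via $|x_n|{\overset{w}{\longrightarrow}}0$ in the weak-sequential-continuity case. Your version merely factors out the shared inequality chain that the paper repeats in each case, and your closing remark correctly pinpoints why some hypothesis is needed.
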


\begin{proof}
\begin{enumerate}
\item Let $S$ and $T$ be two operators from $E$ into $F$ such that $0\leq S \leq T$ and  $T$ is $\sigma$-un-Dunford-Pettis and let $(x_n)$ be a weak-null sequence, we have that $|S(x_n)|\wedge u\leq T(|x_n|)\wedge u$. As, the lattice  operations of $E$ are weak sequentially continuous, it follows that $|x_n|{\overset{w}{\longrightarrow}} 0$ and as $T$ is $\sigma$-un-Dunford-Pettis,  we have  $T(|x_n|)\wedge u=|T(|x_n|)|\wedge u{\overset{\|.\|}{\longrightarrow}} 0$. So, we can conclude that $S$ is a $\sigma$-un-Dunford-Pettis operator.
\item	Let $S$ and $T$ be two operators from $E$ into $F$ such that $0\leq S \leq T$ and  $T$ is $\sigma$-un-Dunford-Pettis and let $(x_n)$ be a weak-null sequence in $E$, since $T$ is $\sigma$-un-Dunford-Pettis, it follows that  $|T(x_n)|\wedge u{\overset {\|.\|}{\longrightarrow}} 0$  for all $u\in F^+$. On the other hand, $T$ is a lattice homomorphism, so $|T(x_n)|=T(|x_n|)$ and $|S(x_n)|\wedge u\leq S(|x_n|)\wedge u \leq T(|x_n|) \wedge u= |T(x_n)|\wedge u\overset{\|.\|}{\longrightarrow} 0$, for each $u\in F^+$. Therefore $S$ is a $\sigma$-un-Dunford-Pettis operator.
\end{enumerate}
\end{proof}

We are in position to give a new result about M-weakly compact operators in terms of sequentially un-compact sets.

\begin{proposition} \label{MW}
	Let $E$ be a Banach lattice and $Y$ be a Banach space. If $T:E\longrightarrow Y$ is an M-weakly compact operator, then  $T(A)$ is a relatively compact set in $Y$ for each  relatively sequentially un-compact set $A$ in $E$.
\end{proposition}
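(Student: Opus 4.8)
The plan is to show that every sequence drawn from $T(A)$ admits a norm-convergent subsequence; since $Y$ is a Banach space, this is exactly the relative compactness of $T(A)$. So fix a sequence $(y_n)$ in $T(A)$ and write $y_n=T(x_n)$ with $x_n\in A$. Because $A$ is relatively sequentially un-compact, there is a subsequence $(x_{n_k})$ and an element $x\in E$ with $x_{n_k}\overset{un}{\longrightarrow} x$ (the un-limit being unique, as already used in Proposition \ref{DEF}). Setting $w_k:=x_{n_k}-x$, we get $w_k\overset{un}{\longrightarrow}0$, and, using the norm-boundedness of $A$, the sequence $(w_k)$ is norm bounded. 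The whole statement will then follow once we know $\|T(w_k)\|\to 0$, for that gives $T(x_{n_k})\overset{\|.\|}{\longrightarrow}T(x)$, i.e.\ a norm-convergent subsequence of $(y_n)$.

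The crux is therefore the following fact about M-weakly compact operators, which I would isolate as a lemma: \emph{if $T:E\longrightarrow Y$ is M-weakly compact and $(z_n)$ is a norm-bounded un-null sequence in $E$, then $\|T(z_n)\|\to 0$.} I would prove it by contradiction. Supposing $\|T(z_n)\|\geq\varepsilon$ along a subsequence, I would combine the un-nullity with the norm bound in a disjointification (gliding-hump) argument to extract a subsequence $(z_{n_k})$ together with a norm-bounded disjoint sequence $(d_k)$ that approximates it closely enough to transfer the norm estimate through $T$, so that $\big|\,\|T(z_{n_k})\|-\|T(d_k)\|\,\big|\to 0$. The defining property of M-weak compactness gives $\|T(d_k)\|\to 0$, whence $\|T(z_{n_k})\|\to 0$, contradicting $\|T(z_{n_k})\|\geq\varepsilon$. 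Applying this lemma to the norm-bounded un-null sequence $(w_k)$ closes the argument.

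The main obstacle is precisely the disjointification step inside the lemma: passing from a norm-bounded un-null sequence to a genuinely disjoint sequence that captures its image under $T$ up to a vanishing error. In a general Banach lattice the link between un-nullity and disjointness is delicate — for instance a norm-bounded disjoint sequence need not itself be un-null when $E$ is not order continuous — so this is where the real work lies, while the surrounding subsequence extraction is routine. I would also be careful about norm-boundedness: the argument genuinely requires $A$ to be norm bounded, and this should be regarded as part of the (relative) sequential un-compactness hypothesis, since an unbounded relatively sequentially un-compact set can be mapped by an M-weakly compact operator onto an unbounded, hence non-compact, set.
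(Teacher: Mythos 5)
Your proposal is correct and follows essentially the same route as the paper: extract a un-convergent subsequence from $A$, reduce to a norm-bounded un-null sequence, disjointify it up to a norm-null error, and apply M-weak compactness to the disjoint sequence. The disjointification step you isolate as ``the real work'' is precisely Theorem 3.2 of \cite{DOT}, which the paper simply cites at that point (and your explicit subtraction of the un-limit $x$ is in fact slightly more careful than the paper's write-up, which nominally concludes $T(x_{n_{k''}})\overset{\|.\|}{\longrightarrow}0$ rather than $T(x_{n_{k''}})\overset{\|.\|}{\longrightarrow}T(x)$).
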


\begin{proof}
	Let $A$ be a relatively sequentially un-compact set  in $E$ and let $(y_n)\subset T(A)$, then $y_{n}=T(x_n)$ for some sequence $(x_n)$ in $A$. Now, as $A$ is relatively sequentially un-compact, it follows that $(x_n)$ has a subsequence $(x_{n_k})$ which is un-convergent to some $x\in E$. Using Theorem 3.2 \cite{DOT} there exist a subsequence $(x_{n_{k''}})$ of $(x_{n_k})$ and a bounded disjoint sequence $(g_{k''})$ in $E$ such that $x_{n_{k''}}-g_{k''}\overset{||.||}{\longrightarrow} 0$, on the other hand since $T$ is M-weakly compact, we have $T(g_{k''})\overset{||.||}{\longrightarrow} 0$, we infer that $T(x_{n_{k''}})\overset{||.||}{\longrightarrow} 0$, therefore $T(A)$ is a relatively compact set of $Y$.
\end{proof}

As a consequence of the above Proposition, we have the next result:

\begin{proposition} \label{MDP}
	Let  $X$, $Y$ be  Banach spaces and $F$ be a Banach lattice and let $S_1:X\longrightarrow F$ and $ S_2:F\longrightarrow Y$ be  operators, where  $S_1$ is $\sigma$-un-Dunford-Pettis and $S_2$ is M-weakly compact, then the product operator $S_2\circ S_1$ is a Dunford-Pettis operator.
\end{proposition}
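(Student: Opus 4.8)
The plan is to use the external characterization of Dunford--Pettis operators recalled in the Preliminaries, namely that an operator is Dunford--Pettis precisely when it sends relatively weakly compact sets to relatively (norm) compact sets, and then to feed the two preceding propositions into one another. Concretely, I would fix a relatively weakly compact set $A\subseteq X$ and aim to show that $(S_2\circ S_1)(A)=S_2\big(S_1(A)\big)$ is relatively compact in $Y$.

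First I would reduce to the hypotheses of Proposition \ref{DEF}. Since $A$ is relatively weakly compact, its weak closure $\overline{A}^{\,w}$ is weakly compact; because $S_1$ is $\sigma$-un-Dunford--Pettis, Proposition \ref{DEF} gives that $S_1\big(\overline{A}^{\,w}\big)$ is relatively sequentially un-compact in $F$. As $S_1(A)\subseteq S_1\big(\overline{A}^{\,w}\big)$, and any subset of a relatively sequentially un-compact set is again relatively sequentially un-compact (every sequence in the subset is a sequence in the larger set, hence admits an un-convergent subsequence), the image $S_1(A)$ is relatively sequentially un-compact.

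Next I would invoke Proposition \ref{MW}. Since $S_2:F\longrightarrow Y$ is M-weakly compact and $S_1(A)$ is relatively sequentially un-compact, Proposition \ref{MW} yields that $S_2\big(S_1(A)\big)$ is relatively compact in $Y$. Thus $S_2\circ S_1$ carries every relatively weakly compact subset of $X$ to a relatively compact subset of $Y$, which is exactly the assertion that $S_2\circ S_1$ is Dunford--Pettis.

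I do not expect a serious obstacle, as the result is essentially a composition of the two earlier propositions; the only points needing care are the passage from ``relatively weakly compact'' in the Dunford--Pettis characterization to the ``weakly compact'' hypothesis of Proposition \ref{DEF} (handled by taking the weak closure and using heredity of relative sequential un-compactness under subsets), and the fact that $S_1$, being bounded, outputs norm-bounded sets so that Proposition \ref{MW} genuinely applies. Alternatively, one could argue sequentially: for a weak-null $(x_n)$ one has $S_1(x_n)\overset{un}{\longrightarrow}0$, and applying Theorem 3.2 of \cite{DOT} together with the M-weak compactness of $S_2$ to each subsequence forces $\|S_2S_1(x_n)\|\to 0$ through a subsequence-of-subsequence argument.
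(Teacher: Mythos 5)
Your proposal is correct and follows essentially the same route as the paper: apply Proposition \ref{DEF} to get that $S_1(A)$ is relatively sequentially un-compact, then Proposition \ref{MW} to conclude that $S_2(S_1(A))$ is relatively compact. Your extra care in passing from ``relatively weakly compact'' to the ``weakly compact'' hypothesis of Proposition \ref{DEF} via the weak closure is a small refinement the paper silently omits, but it does not change the argument.
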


\begin{proof}
	Let $A$ be a relatively weakly compact set in $X$, since  $S_{1}$ is $\sigma$-un-Dunford-Pettis it follows from Proposition \ref{DEF} that $S_{1}(A)$ is a relatively  sequentially un-compact set in $F$. Now, as $S_2$ is M-weakly compact it follows from Proposition \ref{MW} that $S_2\circ S_1(A)$ is a relatively compact set in $Y$ and hence $S_2\circ S_1$ is a Dunford-Pettis operator.
\end{proof}

The following result is a simple consequence of the above Proposition;
	
\begin{corollary}
	Let $E$ and $F$ be Banach lattices such that $E$ is atomic and order continuous. Then, each M-weakly compact operator $T:E\longrightarrow F$ is Dunford-Pettis.
\end{corollary}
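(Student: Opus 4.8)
The plan is to realise $T$ as a composition to which Proposition \ref{MDP} applies directly. The observation that drives everything is that the two hypotheses imposed on $E$ — atomic and order continuous — are precisely the ones characterising when the identity operator on $E$ is $\sigma$-un-Dunford-Pettis. So I can factor $T$ through the identity at no cost and hand the two factors to the product result already proved.

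First I would invoke Proposition \ref{3.1}: since $E$ is assumed to be atomic and order continuous, the identity operator $Id_{E}:E\longrightarrow E$ is $\sigma$-un-Dunford-Pettis. Here I read $E$ both as a Banach space (the domain $X$) and, since every Banach lattice is in particular a Banach space, as the intermediate Banach lattice $F$ appearing in the statement of Proposition \ref{MDP}.

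Next I would set $S_{1}=Id_{E}:E\longrightarrow E$ and $S_{2}=T:E\longrightarrow F$, and note the trivial factorisation $T=S_{2}\circ S_{1}=T\circ Id_{E}$. By the previous step $S_{1}$ is $\sigma$-un-Dunford-Pettis, and by hypothesis $S_{2}=T$ is M-weakly compact. Proposition \ref{MDP} then applies with $X=E$, intermediate lattice $E$, and $Y=F$, and yields immediately that the composite $S_{2}\circ S_{1}=T$ is a Dunford-Pettis operator, which is exactly the assertion.

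I do not expect any genuine obstacle: the only point requiring a line of care is checking that the roles of the spaces match the signature of Proposition \ref{MDP} (domain a Banach space, middle space a Banach lattice, codomain a Banach space), and this is automatic because $E$ is a Banach lattice and hence also a Banach space. All the real content lives in Propositions \ref{3.1} and \ref{MDP}; the corollary is just the composition $T=T\circ Id_{E}$ read through those two results.
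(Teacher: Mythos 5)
Your proof is correct and follows essentially the same route as the paper: the corollary is obtained by factoring $T=T\circ Id_{E}$, noting that $Id_{E}$ is $\sigma$-un-Dunford-Pettis because $E$ is atomic and order continuous (Proposition \ref{3.1}, equivalently Proposition \ref{DEF} applied to the identity), and then applying the composition result of Proposition \ref{MDP} with $S_{1}=Id_{E}$ and $S_{2}=T$. The paper's own proof is just the one-line citation of Propositions \ref{DEF} and \ref{MDP}, so your write-up simply makes the same argument explicit.
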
	

\begin{proof}
	Follows from  Proposition  \ref{DEF} and Proposition \ref{MDP}.
\end{proof}
The class of $\sigma$-un-Dunford-Pettis operators does not satisfy the duality property. That is, there exists $\sigma$-un-Dunford-Pettis operator whose adjoint is not $\sigma$-un-Dunford-Pettis. In fact, the identity operator $Id_{l_1}$ of the Banach lattice $\ell^{1}$ is $\sigma$-un-Dunford-Pettis, but its adjoint which is the identity operator $Id_{\ell^{\infty}}$ of the Banach lattice $\ell^{\infty}$ is not $\sigma$-un-Dunford-Pettis.

In the following result, we give sufficient and necessary conditions under which the direct duality of $\sigma$-un-Dunford-Pettis operators is satisfied.

\begin{theorem}
	Let $E$ and $F$ be two Banach lattices such that $E'$ is atomic. Then, the following statements are equivalent:
	\begin{enumerate}
        \item The adjoint of each $\sigma$-un-Dunford-Pettis operator $T:E\longrightarrow F$ is $\sigma$-un-Dunford-Pettis.
		\item $E'$ is order continuous or $F'$ has the Schur property.
	\end{enumerate}
\end{theorem}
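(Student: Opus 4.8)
The plan is to prove the two implications separately; the implication $(2)\Rightarrow(1)$ is quick, while $(1)\Rightarrow(2)$ requires a construction and carries the real work.

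For $(2)\Rightarrow(1)$ I would treat the two disjuncts in turn and show that in each case \emph{every} bounded operator into $E'$ (respectively, out of $F'$) is automatically $\sigma$-un-Dunford-Pettis, so that $T'$ is $\sigma$-un-Dunford-Pettis for \emph{any} $T$. If $E'$ is order continuous, then since $E'$ is also atomic, Proposition~\ref{3.1} applied to $E'$ shows that $Id_{E'}$ is $\sigma$-un-Dunford-Pettis; equivalently, every weak-null sequence of $E'$ is un-null. As a bounded operator is weak-weak continuous, $T'$ sends a weak-null sequence of $F'$ to a weak-null, hence un-null, sequence of $E'$, so $T'$ is $\sigma$-un-Dunford-Pettis. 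If instead $F'$ has the Schur property, every weak-null sequence $(f_n)$ of $F'$ is norm-null, whence $\|T'(f_n)\|\le\|T'\|\,\|f_n\|\to 0$, so $(T'(f_n))$ is un-null; again $T'$ is $\sigma$-un-Dunford-Pettis. Either way $(1)$ holds.

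For $(1)\Rightarrow(2)$ I would argue by contraposition: assuming $E'$ is not order continuous \emph{and} $F'$ fails the Schur property, I would build a $\sigma$-un-Dunford-Pettis operator $T:E\to F$ whose adjoint is not $\sigma$-un-Dunford-Pettis. Since $E'$ is not order continuous, there is (by the standard characterization of order continuity, cf.\ \cite{AB}) an order bounded disjoint sequence $(d_n)$ in $(E')^+$ with $0\le d_n\le w$ for some $w\in(E')^+$ and $\|d_n\|\ge\varepsilon>0$. Disjointness gives $\bigvee_{n\le N}d_n\le w$, hence $\sum_n|d_n(x)|\le w(|x|)\le\|w\|\,\|x\|$ for each $x\in E$, so $A:E\to\ell^1$, $A(x)=(d_n(x))_n$, is a well-defined bounded operator; because $\ell^1$ has the Schur property, $A$ is Dunford-Pettis. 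Using that $F'$ is not Schur, I fix a normalized weak-null sequence in $F'$ and, by a Bessaga--Pe\l czy\'nski gliding-hump argument, pass to a subsequence $(f_m)$ and choose a bounded sequence $(y_n)$ in $F$ with $f_m(y_n)=0$ whenever $n>m$, with $f_n(y_n)\ge\tfrac12$, and with $|f_m(y_n)|$ arbitrarily small whenever $n<m$. Setting $B:\ell^1\to F$, $B((c_n)_n)=\sum_n c_n y_n$ (bounded since $(y_n)$ is bounded), I put $T=B\circ A$; as a composition of the Dunford-Pettis operator $A$ with the bounded operator $B$, $T$ is Dunford-Pettis, hence $\sigma$-un-Dunford-Pettis.

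It remains to see that $T'$ is not $\sigma$-un-Dunford-Pettis, which is the technical heart. Computing the adjoint gives $T'(f)=A'B'(f)=\sum_n f(y_n)\,d_n$, so $T'(f_m)=f_m(y_m)\,d_m+\sum_{n<m}f_m(y_n)\,d_n$, the upper off-diagonal terms having vanished. Since the $d_n$ are disjoint and dominated by $w$, the error term satisfies $\|\sum_{n<m}f_m(y_n)d_n\|\le\|w\|\sup_{n<m}|f_m(y_n)|\le\delta\|w\|$, which the construction forces below any prescribed $\delta$. Using $0\le f_m(y_m)d_m\le w$ together with $f_m(y_m)\ge\tfrac12$, a short lattice estimate (passing through positive parts) then yields $\big\||T'(f_m)|\wedge w\big\|\ge\tfrac12\|d_m\|-\delta\|w\|\ge\tfrac14\varepsilon$ for $\delta$ chosen small; thus $(T'(f_m))$ is \emph{not} un-null although $(f_m)$ is weak-null, so $T'$ is not $\sigma$-un-Dunford-Pettis and $(1)$ fails. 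I expect the main obstacle to be precisely the gliding-hump selection of $(f_m)$ and $(y_n)$ and the accompanying lattice estimate; by contrast, the factorization through $\ell^1$ that forces $T$ to be Dunford-Pettis, and the order-continuity/Schur dichotomy for $(2)\Rightarrow(1)$, are comparatively routine once Proposition~\ref{3.1} is in hand.
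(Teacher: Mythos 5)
Your proof is correct, and its skeleton is exactly the paper's: for $(2)\Rightarrow(1)$ the same dichotomy (Proposition \ref{3.1} applied to the atomic order continuous dual $E'$ plus weak-weak continuity of $T'$, versus the Schur property of $F'$ making $T'$ Dunford--Pettis), and for $(1)\Rightarrow(2)$ the same factorization $E\to\ell^1\to F$ built from an order bounded disjoint normalized sequence $(d_n)$ in $(E')^+$ witnessing non-order-continuity and a normalized weak-null sequence $(f_n)$ in $F'$ witnessing failure of Schur. The one real divergence is the step you single out as the technical heart: the Bessaga--Pe\l czy\'nski gliding-hump selection is unnecessary. Since the $d_k$ are pairwise disjoint, $f(y_m)d_m$ is disjoint from $\sum_{k\ne m}f(y_k)d_k$, so $|T'(f)|\ge |f(y_m)|\,d_m$ for every $m$ no matter what the off-diagonal coefficients are; hence it suffices to take any $y_n\in B_F$ with $|f_n(y_n)|>1/2$ (possible since $\|f_n\|=1$), and then $|T'(f_m)|\wedge w\ge \tfrac12 d_m\wedge w=\tfrac12 d_m$ immediately gives $\||T'(f_m)|\wedge w\|\ge\tfrac12\|d_m\|$. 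This is precisely how the paper argues. Your biorthogonal selection ($f_m(y_n)=0$ for $n>m$, $|f_m(y_n)|$ small for $n<m$, $f_n(y_n)\ge\tfrac12$) is achievable --- pass to a basic subsequence so that $\mathrm{dist}(f_n,\mathrm{span}\{f_1,\dots,f_{n-1}\})$ is bounded below, then diagonalize --- and your positive-part estimate is sound, so nothing is wrong; it is simply work that disjointness in the \emph{range} space $E'$ already does for you. If anything, your explicit treatment of the cross terms is more scrupulous than the paper's displayed computation, which suppresses the summation index, but the disjointness identity is what makes that display correct as intended.
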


\begin{proof}
	$(2)\Longrightarrow (1)$ If  $F'$ has the Schur property, then the operator $T':F'\longrightarrow E'$ is Dunford-Pettis and hence it is  $\sigma$-un-Dunford-Pettis. On the other hand, if  $E'$ is order continuous and atomic, by Proposition 2.6 \cite{DOT} and Proposition \ref{4.2} we infer that $T':F'\longrightarrow E'$ is a $\sigma$-un-Dunford-Pettis operator.\\
	$(1)\Longrightarrow (2)$ Assume that $E'$ is not order continuous and $F'$ does not satisfy the Schur property, then there exist an order bounded disjoint sequence $(g_n)$ in $E'$ such that $\| g_n\|=1$  for all $n\in\mathbb{N}$ and $ |g_n|\leq g$ for some $g\in (E')^{+}$, also there exists $f_n{\overset{w}{\longrightarrow}} 0$ in $F'$ such $\| f_n\|=1$ for each  $ n $, so we can find  $(y_n)\in B_F$ such that $|f_{n}(y_n)|> 1/2$ for each $n \in \mathbb{N}$. Now, we consider the following operators,
	$$ \begin{array}{lrcl}
	S_1 : & \ell^1 & \longrightarrow & F \\
	& (\lambda_n)_n & \longmapsto & \sum_{n=1}^{\infty} \lambda_{n}y_n  \end{array}  $$
	and
	$$ \begin{array}{lrcl}
	S_2 : & E & \longrightarrow & \ell^1 \\
	& x & \longmapsto & (g_{n}(x))  \end{array}  $$
	$S_1$ and $S_2$ are well-defined operators, and the product operator $T=S_1\circ S_2$  is $\sigma$-un-Dunford-Pettis, but its adjoint operator is not $\sigma$-un-Dunford-Pettis. Indeed, let $\phi=\frac{1}{2} g \in (E')^+$, we have
\begin{eqnarray*}
|T'(f_n)|\wedge\phi &=& |\sum_{n=1}^{\infty} f_{n}(y_n)g_n |\wedge \phi \\
   &=& |\bigvee_{n=1}^{\infty}f_{n}(y_n)g_n |\wedge \phi \\
   &\geq& |f_{n}(y_n)||g_n |\wedge \phi \\
   &\geq & \frac{1}{2}|g_n |\wedge\frac{1}{2} g = \frac{1}{2}|g_n |
\end{eqnarray*}
 then $$\| |T'(f_n)|\wedge\phi \|\geq  \frac{1}{2}\|g_n\|=\frac{1}{2},$$  this show that $T'(f_n){\overset{un}{\nrightarrow}} 0$ and hence $T'$ is not $\sigma$-un-Dunford-Pettis operator, this makes a contradiction and ends the proof.
\end{proof}


Also, the Reciprocal duality is not satisfied in the class of $\sigma$-un-Dunford-Pettis operators. Indeed, the canonical injection $ i:c_0\longrightarrow \ell^\infty$  is not $\sigma$-un-Dunford-Pettis, however its adjoint operator $ i':(\ell^\infty)'\longrightarrow \ell^1$ is $\sigma$-un-Dunford-Pettis (since it is a Dunford-Pettis operator).

\begin{theorem}
	Let $E$ and $F$ be Banach lattices such that the lattice operation of $E$ are weakly sequentially continuous and $F$ is atomic. Then the following statements are equivalent:
	\begin{enumerate}
		\item  Each operator $T:E\longrightarrow F$ is $\sigma$-un-Dunford-Pettis whenever its adjoint operator $T':F'\longrightarrow E'$ is $\sigma$-un-Dunford-Pettis.
		\item One of the following conditions is valid:
		\begin{enumerate}
			\item $E$ has the positive Schur property;
			\item $F$ is order continuous.
		\end{enumerate}
	\end{enumerate}
\end{theorem}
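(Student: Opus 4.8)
The plan is to prove the two implications separately; the implication $(2)\Rightarrow(1)$ is routine, while $(1)\Rightarrow(2)$ needs an explicit counterexample and carries all the difficulty. For $(2)\Rightarrow(1)$ I would show that under either condition in (2) \emph{every} operator $T:E\longrightarrow F$ is already $\sigma$-un-Dunford-Pettis, so that (1) holds trivially. If $E$ has the positive Schur property, then together with the weak sequential continuity of the lattice operations this forces $E$ to have the Schur property: for a weak-null sequence $(x_n)$ one has $|x_n|\overset{w}{\longrightarrow}0$, hence $\|x_n\|=\||x_n|\|\longrightarrow 0$, so every bounded $T$ sends $(x_n)$ to a norm-null, hence un-null, sequence. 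If instead $F$ is order continuous, then $F$ is atomic and order continuous, so by Proposition \ref{3.1} the identity $Id_F$ is $\sigma$-un-Dunford-Pettis; since $T(x_n)\overset{w}{\longrightarrow}0$ whenever $x_n\overset{w}{\longrightarrow}0$, applying $Id_F$ to the weak-null sequence $(T(x_n))$ yields $T(x_n)\overset{un}{\longrightarrow}0$.

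For $(1)\Rightarrow(2)$ I would argue by contraposition: assuming $E$ fails the positive Schur property \emph{and} $F$ is not order continuous, I will build an operator $T:E\longrightarrow F$ whose adjoint is $\sigma$-un-Dunford-Pettis but which is itself not $\sigma$-un-Dunford-Pettis. Since $F$ is not order continuous, the standard characterization (every order bounded disjoint positive sequence of an order continuous lattice is norm-null) produces an order bounded disjoint sequence $(y_n)\subset F^{+}$ with $0\le y_n\le y$ and, after normalizing, $\|y_n\|=1$. Since $E$ fails the positive Schur property there is a sequence $0\le x_n\overset{w}{\longrightarrow}0$ with $\|x_n\|=1$; choosing positive norming functionals $\phi_n$ with $\|\phi_n\|=1$ and $\phi_n(x_n)=1$, I would pass to a weak$^{*}$-convergent subsequence $\phi_{n_k}\overset{w^{*}}{\longrightarrow}\phi$ and set $f_k=\phi_{n_k}-\phi$, so that $\|f_k\|\le 2$, $f_k\overset{w^{*}}{\longrightarrow}0$, and $f_k(x_{n_k})=1-\phi(x_{n_k})\longrightarrow 1$.

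Next I would define $S_2:E\longrightarrow c_0$ by $S_2(x)=(f_k(x))_k$, which lands in $c_0$ precisely because $f_k\overset{w^{*}}{\longrightarrow}0$ and is bounded since $\|S_2(x)\|_\infty\le 2\|x\|$, and $S_1:c_0\longrightarrow F$ by $S_1((a_k))=\sum_k a_k y_k$, which converges because the disjointness and the bound $0\le y_k\le y$ give $|\sum_{k\ge M}a_k y_k|\le(\sup_{k\ge M}|a_k|)\,y\to 0$, so $S_1$ is a well-defined bounded operator with $S_1(e_k)=y_k$. Setting $T=S_1\circ S_2$, disjointness of $(y_m)$ yields $|T(x_{n_k})|=\sum_{m}|f_m(x_{n_k})|\,y_m\ge |f_k(x_{n_k})|\,y_k\ge\tfrac12 y_k$ for large $k$, whence $\|T(x_{n_k})\|\ge\tfrac12$; moreover $|T(x_{n_k})|\le 2y$ is order bounded, so on this sequence un-convergence coincides with norm convergence and $T(x_{n_k})\overset{un}{\nrightarrow}0$ although $x_{n_k}\overset{w}{\longrightarrow}0$. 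Thus $T$ is not $\sigma$-un-Dunford-Pettis.

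Finally I would check that $T'$ is $\sigma$-un-Dunford-Pettis. Writing $T'=S_2'\circ S_1'$ with $S_1':F'\longrightarrow(c_0)'=\ell^1$ and $S_2':\ell^1\longrightarrow E'$, and using that $\ell^1$ has the Schur property, any $g_n\overset{w}{\longrightarrow}0$ in $F'$ gives $S_1'(g_n)\overset{w}{\longrightarrow}0$ in $\ell^1$, hence $\|S_1'(g_n)\|_1\to 0$ and so $\|T'(g_n)\|\to 0$; therefore $T'$ is Dunford-Pettis and in particular $\sigma$-un-Dunford-Pettis, contradicting (1) and completing the contrapositive. The step I expect to be the \textbf{main obstacle} is guaranteeing that the functionals $f_k$ are weak$^{*}$-null on \emph{all} of $E$, which is what makes $S_2$ take values in $c_0$: the subtraction trick $f_k=\phi_{n_k}-\phi$ requires weak$^{*}$ sequential compactness of the ball of $E'$, available after restricting to the separable closed sublattice generated by $(x_n)$, and the delicate point is to transfer this weak$^{*}$-nullity back to functionals defined on the whole space $E$.
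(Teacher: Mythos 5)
Your direction $(2)\Rightarrow(1)$ is correct and essentially the paper's argument (positive Schur plus weak sequential continuity of the lattice operations gives the Schur property; atomicity plus order continuity of $F$ makes $Id_F$ $\sigma$-un-Dunford-Pettis, and you compose). Likewise, your verification that $T$ fails to be $\sigma$-un-Dunford-Pettis and that $T'$ is Dunford-Pettis because it factors through $\ell^1$ matches the paper. The genuine gap is the construction of $S_2$ in $(1)\Rightarrow(2)$, and it is exactly the obstacle you flag at the end: for $S_2$ to take values in $c_0$ you need the functionals $f_k=\phi_{n_k}-\phi$ to be weak$^*$-null on \emph{all} of $E$, but $B_{E'}$ is not weak$^*$ sequentially compact for a general Banach lattice $E$, and the repair you sketch does not close the gap. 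If you pass to the separable closed sublattice $E_0$ generated by $(x_n)$ and extract a weak$^*$-convergent subsequence of the restrictions $\phi_{n_k}|_{E_0}$, any Hahn--Banach extensions of the differences back to $E$ are controlled only on $E_0$; there is no reason for them to vanish asymptotically on vectors outside $E_0$, so $(f_k(x))_k$ need not lie in $c_0$ for $x\in E\setminus E_0$. As written, the operator $S_2$ is simply not defined.

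The paper circumvents this by using the lattice structure instead of weak$^*$ compactness: the failure of the positive Schur property yields a \emph{disjoint} normalized weakly null sequence $(x_n)$ in $E^+$, and Lemma 3.4 of \cite{hmich} then produces a disjoint sequence $(g_n)$ in $(E')^+$ with $\|g_n\|\le 1$, $g_n(x_n)=1$ and $g_n(x_m)=0$ for $n\ne m$; it is the disjointness of the $(g_n)$ that makes $x\mapsto(g_n(x))$ land in $c_0$, after which the argument proceeds exactly as yours ($T=R\circ S$ sends $x_n$ to $y_n$, so $\||T(x_n)|\wedge y\|=\|y_n\|=1$). To repair your proof, replace the norming-functional/weak$^*$-limit device by this disjoint biorthogonal construction, exploiting that the weakly null positive sequence witnessing the failure of the positive Schur property can be taken disjoint.
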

\begin{proof}
$(2-a)\Longrightarrow (1)$	If $E$ has the positive Schur property and the lattice operations in $E$ are weakly sequentially continuous, then $E$ has Schur property and hence $T:E\longrightarrow F$ is a $\sigma$-un-Dunford-Pettis operator.\\
	$(2-b)\Longrightarrow (1)$ Since $F$ is atomic and order continuous, it follows from Proposition 2.6 \cite{DOT} and Proposition \ref{4.2} that $T$ is $\sigma$-un-Dunford-Pettis. \\
	$(1)\Longrightarrow (2)$ Assume that $F$ is not order continuous and $E$ does not have the positive Schur property, then by Theorem 2.4.2 \cite{Mey} there exists an order bounded disjoint sequence $(y_n)$ in $F^{+}$ such that $\|y_n\|=1$   for all $n\in\mathbb{N}$ and there is an element $y\in F_{+}$ such that $0 \leq y_n\leq y $.  We consider  the operator $$ \begin{array}{lrcl}
	R : & c_0 & \longrightarrow & F \\
	& (\lambda_n)_n & \longmapsto & \sum_{n=1}^{\infty} \lambda_{n}y_n , \end{array}  $$
	 using the some arguments from the proof of Theorem 117.1 \cite{zan},  we can check that the operator $R$ is will defined. On the other hand, since $E$ does not have the positive Schur property, there exists  a disjoint weakly null sequence $(x_n$) in $E^{+}$ such that $\|x_n\|=1$ for all $n$, hence  by Lemma 3.4 \cite{hmich} there exists a positive disjoint sequence $(g_n)$ in $(E')^{+}$ with $\|g_n\|\leq 1$ such that $g_n(x_n) = 1$ for all $n$ and $g_n(x_m)= 0 $ if $n\neq m$. We  Consider the operator$$
	\begin{array}{lrcl}
	S : & E & \longrightarrow & c_0 \\
	& x & \longmapsto & (g_n(x))_{n=0}^{\infty},  \end{array}  $$

	clearly that $S$ is well defined. \\
Let $T=R\circ S$. we have

\begin{eqnarray*}
  \||T(x_n)|\wedge y\| &=& \||R\circ S(x_n)|\wedge y\|=\||R(e_n)|\wedge y\| \\
   &=& \||y_n|\wedge y \|=\|y_n\|=1 \quad \forall n\in\mathbb{N}.
\end{eqnarray*}
 Therefore $T$ fails to be $\sigma$-un-Dunford-Pettis. However, its adjoint operator $T'$ is $\sigma$-un-Dunford-Pettis.
\end{proof}

For the next results, we need to recall the definition of sequentially un-compact operators.

\begin{definition} \cite{KD}
Let $X$ be a Banach space and  $F$ be a Banach lattice.  An operator $T:X\longrightarrow F$ is said to be sequentially un-compact if $T(B_{X})$ is a relatively sequentially un-compact set of $F$, equivalently, the image of every bounded sequence $(x_n)$ in $X$ has a subsequence which is un-convergent.
\end{definition}

\begin{proposition} \label{10.1}
Each sequentially un-compact operator $T:X\longrightarrow F$ from a Banach space $X$  into a Banach lattice $F$ is $\sigma$-un-Dunford-Pettis.
\end{proposition}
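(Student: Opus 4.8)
The plan is to deduce this directly from the characterisation of $\sigma$-un-Dunford-Pettis operators established in Proposition \ref{DEF}, rather than arguing from the definition through a weak-null sequence. By Proposition \ref{DEF}, it suffices to show that $T(A)$ is relatively sequentially un-compact for every weakly compact set $A\subseteq X$. The starting observation is that a weakly compact set is norm bounded, so $A\subseteq \lambda B_X$ for some $\lambda>0$, where $B_X$ is the closed unit ball of $X$.

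Since $T$ is sequentially un-compact, $T(B_X)$ is relatively sequentially un-compact by definition. I would then record two elementary closure properties of the class of relatively sequentially un-compact sets: first, that it is stable under scalar multiples (because $x_\alpha{\overset{un}{\longrightarrow}} x$ forces $\lambda x_\alpha{\overset{un}{\longrightarrow}} \lambda x$, the un-topology being a linear topology), so that $\lambda T(B_X)$ is again relatively sequentially un-compact; and second, that any subset of a relatively sequentially un-compact set inherits the property, since a sequence in the subset is in particular a sequence in the larger set. Combining these with the inclusion $T(A)\subseteq \lambda T(B_X)$ yields that $T(A)$ is relatively sequentially un-compact.

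With $T(A)$ shown to be relatively sequentially un-compact for every weakly compact $A$, Proposition \ref{DEF} immediately gives that $T$ is $\sigma$-un-Dunford-Pettis, completing the argument.

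I expect the only genuinely delicate point to be hidden in the alternative, more hands-on route one might be tempted to take: arguing directly that $T(x_n){\overset{un}{\longrightarrow}} 0$ for a weak-null sequence $(x_n)$. There, after extracting from the bounded sequence $(T(x_n))$ an un-convergent subsequence $T(x_{n_k}){\overset{un}{\longrightarrow}} y$, one must identify the limit as $y=0$; since the un-limit and the (obvious) weak limit $0$ of $T(x_{n_k})$ need not a priori coincide, this requires a separate argument (for instance, via Theorem 3.2 of \cite{DOT} to replace the un-null sequence by a disjoint one, followed by a Cesàro-mean estimate). Passing through Proposition \ref{DEF} sidesteps this obstacle entirely, which is why I would adopt it.
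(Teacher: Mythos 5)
Your argument is correct and follows essentially the same route as the paper's proof: reduce to the characterisation in Proposition \ref{DEF}, note that a weakly compact set is norm bounded, and conclude that $T(A)\subseteq\lambda T(B_X)$ inherits relative sequential un-compactness. The paper states this more tersely, leaving implicit the two closure properties (under scalar multiples and under passing to subsets) that you spell out, but the substance is identical.
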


\begin{proof}
	Let $A\subset X$ be a weakly compact set, hence $A$ is a bounded set of $X$. On the other hand, since $T$ is sequentially un-compact we have $T(A)$ is a relatively  sequentially un-compact set of $F$. Thus, $T$ is $\sigma$-un-Dunford-Pettis operator.
\end{proof}

Note that the converse of the previous result does not hold. Indeed, if we consider $Id_{c_0}$ the identity operator of $c_0$, it is clear that $Id_{c_0}$ is $\sigma$-un-Dunford-Pettis (since $c_0$ is atomic and order continuous), but fails to be sequentially un-compact (since $c_0$ it is not KB-space (See Theorem 7.5 \cite{KD})).

From Proposition  \ref{DEF} we have the following result;

\begin{corollary} \label{13.1}
	If $X$ is a reflexive Banach lattice then for every Banach lattice $F$, each $\sigma$-un-Dunford-Pettis operator $T:X\longrightarrow F$	
	is sequentially un-compact.	
\end{corollary}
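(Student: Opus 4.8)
The plan is to read off the result directly from the characterization in Proposition~\ref{DEF}, using only the defining property of reflexive spaces. First I would recall that, by Kakutani's theorem, a Banach space is reflexive if and only if its closed unit ball is weakly compact. In particular, when $X$ is a reflexive Banach lattice, the closed unit ball $B_X$ is a weakly compact subset of $X$.

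Next I would invoke Proposition~\ref{DEF} with the specific choice $A=B_X$. Since $T:X\longrightarrow F$ is by hypothesis $\sigma$-un-Dunford-Pettis and $B_X$ is weakly compact, that proposition guarantees that $T(B_X)$ is a relatively sequentially un-compact subset of $F$.

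Finally I would compare this with the definition of sequentially un-compact operators: an operator $T:X\longrightarrow F$ is sequentially un-compact precisely when $T(B_X)$ is relatively sequentially un-compact. The conclusion of the previous step is exactly this condition, so $T$ is sequentially un-compact and the proof is complete.

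I do not anticipate any real obstacle here, as the statement is essentially a one-line corollary of Proposition~\ref{DEF}. The only point deserving attention is using the correct direction of the reflexivity criterion, namely that reflexivity of $X$ makes the \emph{whole} unit ball $B_X$ weakly compact (rather than, say, merely its order intervals), which is precisely what lets us feed $B_X$ into Proposition~\ref{DEF} and then compare with the definition of a sequentially un-compact operator.
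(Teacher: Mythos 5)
Your proof is correct and is exactly the argument the paper intends: the paper simply remarks that the corollary "follows from Proposition~\ref{DEF}," and your route — reflexivity makes $B_X$ weakly compact, Proposition~\ref{DEF} then makes $T(B_X)$ relatively sequentially un-compact, which is the definition of a sequentially un-compact operator — is that deduction spelled out.
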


In the following result, we characterize Banach lattices under which each $\sigma$-un-Dunford-Pettis operator is sequentially un-compact.
\begin{proposition}
	Let $E$ and $F$ be Banach lattices such that $E$ is a KB-space. The following statements are equivalent:
	\begin{enumerate}
		\item Each $\sigma$-un-Dunford-Pettis operator $T:X\longrightarrow F$ is sequentially un-compact.
		\item One of the conditions is holds:
        \begin{enumerate}
            \item $E$ reflexive;
			\item $F$ is an atomic KB-space.
		\end{enumerate}
	\end{enumerate}
\end{proposition}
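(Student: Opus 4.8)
The plan is to prove the two implications separately, with $(2)\Rightarrow(1)$ being immediate from the results already established and $(1)\Rightarrow(2)$ being the substantial half, handled by contraposition through an explicit factorization $E\xrightarrow{S_2}\ell^1\xrightarrow{S_1}F$. Throughout I read the operator of statement $(1)$ as $T:E\longrightarrow F$, the domain being the KB-space $E$.

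For $(2)\Rightarrow(1)$ I would distinguish the two conditions. If $E$ is reflexive, then $(1)$ is precisely Corollary \ref{13.1}. If instead $F$ is an atomic KB-space, I would apply Theorem 7.5 \cite{KD} to get that $B_F$ is relatively sequentially un-compact; then for \emph{any} operator $T:E\longrightarrow F$ one has $T(B_E)\subseteq\|T\|\,B_F$, and since a subset of a relatively sequentially un-compact set is again relatively sequentially un-compact (every sequence in it is a sequence in the larger set, hence has an un-norm convergent subsequence), $T$ is sequentially un-compact. In particular every $\sigma$-un-Dunford-Pettis operator is; note this case uses neither the $\sigma$-un-Dunford-Pettis hypothesis nor that $E$ is a KB-space.

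For $(1)\Rightarrow(2)$ I would argue by contraposition: assuming $E$ is not reflexive and $F$ is not an atomic KB-space, I would exhibit a $\sigma$-un-Dunford-Pettis operator $T:E\longrightarrow F$ that is not sequentially un-compact. On the source side, since $E$ is a KB-space it contains no copy of $c_0$, so its non-reflexivity forces a sublattice lattice-isomorphic to $\ell^1$ and hence that $E'$ is \emph{not} order continuous; this yields a normalized, positive, order bounded disjoint sequence $(g_n)$ in $E'$, say $0\le g_n\le g$. The operator $S_2:E\longrightarrow\ell^1$, $S_2(x)=(g_n(x))_n$, is then well defined and bounded, because disjointness gives $\sum_n|g_n(x)|\le g(|x|)\le\|g\|\,\|x\|$, exactly as in the construction of the first duality theorem. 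On the target side, since $F$ is not an atomic KB-space, Theorem 7.5 \cite{KD} shows $B_F$ is not relatively sequentially un-compact, so there is a bounded sequence $(w_n)$ in $F$ with no un-norm convergent subsequence; I set $S_1:\ell^1\longrightarrow F$, $S_1((\lambda_n)_n)=\sum_n\lambda_n w_n$. The composition $T=S_1\circ S_2$ is Dunford-Pettis, since any operator factoring through $\ell^1$ is (a weak-null sequence is carried by $S_2$ to a norm-null one by the Schur property, then by $S_1$ to a norm-null, hence un-norm null, one), and therefore $\sigma$-un-Dunford-Pettis.

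The main obstacle, and the step I would carry out most carefully, is arranging that $T$ actually fails to be sequentially un-compact, which requires controlling the off-diagonal terms $g_k(u_n)$ for a suitable bounded sequence $(u_n)$ in $E$. Here I would exploit that $E$, being a KB-space, is order continuous and Dedekind complete, so the disjoint positive functionals $g_n$ have pairwise disjoint carrier bands in $E$; choosing $u_n$ in the carrier of $g_n$ with $\|u_n\|\le 1$ and $g_n(u_n)>1/2$ kills all cross terms, giving $S_2(u_n)=g_n(u_n)e_n$ and hence $T(u_n)=g_n(u_n)\,w_n$. Since $g_n(u_n)\in(1/2,1]$, any un-norm convergent subsequence of $(T(u_n))$ would, after passing to a further subsequence along which the scalars converge to a positive limit, force an un-norm convergent subsequence of $(w_n)$, contrary to its choice; thus $T(B_E)$ is not relatively sequentially un-compact, contradicting $(1)$. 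Establishing the structural input that a non-reflexive KB-space has non-order-continuous dual (equivalently, contains a lattice copy of $\ell^1$), via the reflexivity characterization by the KB-property together with the dichotomy for order continuity of $E'$ (see \cite{Mey,AB}), is the remaining point to pin down.
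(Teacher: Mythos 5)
Your proposal is correct, and the easy direction matches the paper: case (2-a) is Corollary \ref{13.1}, and case (2-b) is the un-compactness of $B_F$ for an atomic KB-space $F$ (the paper cites Proposition 9.1 of \cite{KD} rather than passing through Theorem 7.5 and hereditariness, but it is the same fact). For $(1)\Longrightarrow(2)$, however, you take a genuinely different and more labor-intensive route. The paper also starts from a bounded sequence $(y_n)$ in $F$ with no un-convergent subsequence and the operator $S:\ell^1\to F$, $(\lambda_n)\mapsto\sum\lambda_n y_n$, but then uses the full strength of Theorem 2.4.14 of \cite{Mey}: a non-reflexive KB-space contains a lattice copy of $\ell^1$ that is the range of a \emph{positive projection} $P:E\to\ell^1$. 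Setting $T=S\circ P$ and letting $i:\ell^1\to E$ be the inclusion, the identity $T\circ i=S\circ P\circ i=S$ immediately hands back $S(e_n)=y_n$ inside $T(B_E)$ (up to a constant), so no diagonalization is needed. You instead build the factorization $E\to\ell^1$ by hand from a normalized disjoint order-bounded sequence $(g_n)$ in $E'$, which forces you to manufacture the witnessing sequence $(u_n)$ yourself; your carrier-band argument does close this gap correctly (the $g_n$ are order continuous because a KB-space is order continuous, so their carriers are pairwise disjoint projection bands, the band components $u_n$ satisfy $g_k(u_n)=\delta_{kn}g_n(u_n)$ with $g_n(u_n)>1/2$, and the scalar-rescaling step at the end is legitimate since un-convergence is a linear topology and the sequences involved are norm bounded). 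In short: the paper buys simplicity by invoking the projection half of Theorem 2.4.14, while you only use the non-order-continuity of $E'$ but pay for it with the carrier analysis, which in turn genuinely uses that $E$ is order continuous. Both arguments are sound; yours is longer but makes the mechanism of failure more explicit.
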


\begin{proof}
	 $(2-b)\Longrightarrow (1)$ Assume that $F$ is an atomic KB-space, we infer  by Proposition 9.1 \cite{KD} that  $T:X\longrightarrow F$	is a sequentially un-compact operator.\\
	$(2-a)\Longrightarrow (1)$ Follows from Corollary \ref{13.1}. \\
	$(1)\Longrightarrow (2)$ Suppose that (2) does not hold, then neither $F$
	is an atomic KB-space nor $X$ is reflexive, it follows from Theorem 7.5 \cite{KD}  that there is a bounded sequence $(y_n)$ in $F$ which does not have any un-convergent subsequence.\\ We may consider the operator;  $$ \begin{array}{lrcl}
	S : & \ell^1 & \longrightarrow & F \\
	& (\lambda_n)_n & \longmapsto & \sum_{n=1}^{\infty} \lambda_{n}y_n  \end{array}  $$
	which is clearly $\sigma$-un-Dunford-Pettis. On the other hand, by Theorem 2.4.15 \cite{Mey} $E'$ is order continuous and it follows from Theorem 2.4.14 \cite{Mey} that $E$ contains a sublattice isomorphic  copy of $\ell^1$ and there exist a positive projection $P:E\longrightarrow \ell^1$ and a canonical injection $i:\ell^1\longrightarrow E$. Now, if we consider the composed operator $T=S\circ P:E\longrightarrow \ell^1 \longrightarrow F$,  $T$ is clearly  $\sigma$-un-Dunford-Pettis and by our hypothesis  $T$ will be sequentially un-compact, so $T\circ i=S\circ P\circ i=S$ is sequentially un-compact, thus $S(e_n)=y_n$ has an un-convergent subsequence, this makes a contradiction and completes the proof.
\end{proof}

\end{document}